\newcommand{\C}{\mathcal C}
\DeclareMathOperator{\id}{id}
\tikzset{>=Implies}
\newlength{\myline}% line thickness
\newcommandx*{\doublearrow}[2]{% #1 = draw options (must contain arrow type), #2 = path
  \draw[line width=rule_thickness,double equal sign distance,#1] #2;
}
\newcommandx*{\triplearrow}[4][1=0, 2=1]{% #1 = shorten left (optional), #2 = shorten right (optionsl),
% #3 = draw options (must contain arrow type), #4 = path
  \draw[line width=\myline,double distance=5\myline,#3] #4;
  \draw[line width=\myline,shorten <=#1\myline,shorten >=#2\myline,#3] #4;
}
\newcommandx*{\quadarrow}[4][1=0, 2=2.5]{% #1 = shorten left (optional), #2 = shorten right (optionsl),
% #3 = draw options (must contain arrow type), #4 = path
  \draw[line width=\myline,double distance=8\myline,#3] #4;
  \draw[line width=\myline,double distance=2\myline,shorten <=#1\myline,shorten >=#2\myline,#3] #4;
}
\newcommand{\rectangle}[5]{
  \begin{tikzcd}[ampersand replacement=\&, nodes in empty cells, cramped, row sep = #5, column sep = #4]%
  \arrow[d, start anchor = {south}, end anchor = {north}]
  \arrow[\replicate{#2}{d} \replicate{#1}{r}, phantom, start anchor = {real center}, end anchor = {real center}, #3 description]
  \replicate{#1}{ {} \arrow[r, start anchor = {real east}, end anchor = {real west}] \& } 
  \arrow[d, start anchor = {south}, end anchor = {north}] \\
  \replicate{#2-1}{%
  \arrow[d, start anchor = {south}, end anchor = {north}] %
  \replicate{#1}{ {} \& } {} %
  \arrow[d, start anchor = {south}, end anchor = {north}] \\ }
  \replicate{#1}{ {} \arrow[r, start anchor = {real east}, end anchor = {real west}] \& }
  \end{tikzcd}%
}
\newcommand{\smallsquare}[3]{\rectangle{#1}{#2}{#3}{.7cm}{.7cm}}
\author{Maxime LUCAS\footnote{Univ. Paris Diderot, Sorbonne Paris Cit\'e, IRIF, UMR 8243 CNRS, PiR2, INRIA Paris-Rocquencourt, F-75205 Paris, France - \texttt{maxime.lucas@pps.univ-paris-diderot.fr}}}
\title{A cubical Squier's theorem}
\date{}
\begin{document}

\maketitle

\bigskip 

\theoremstyle{plain}
\newtheorem{thm}{Theorem}[section]
\newtheorem{prop}[thm]{Proposition}
\newtheorem{lem}[thm]{Lemma}
\newtheorem{cor}[thm]{Corollary}
\newtheorem{conj}[thm]{Conjecture}
\newtheorem{quest}[thm]{Question}

\theoremstyle{definition}
\newtheorem{defn}[thm]{Definition}
\newtheorem{ex}[thm]{Example}
\newtheorem{remq}[thm]{Remark}
\newtheorem{nota}[thm]{Notation}

Convergent rewriting systems are well-known tools in the study of the word-rewriting problem. In particular, a presentation of a monoid by a finite convergent rewriting system gives an algorithm to decide the word problem for this monoid. In \cite{S87}, \cite{SOK94}, Squier proved that there exists a  finitely presented monoid whose word problem was decidable but which did not admit a finite convergent presentation. To do so, Squier constructed, for any convergent presentation $(G,R)$ of a monoid $M$, a set of syzygies $S$ corresponding to relations between the relations. This construction was later extended (see \cite{GM12}) into the construction of a polygraphic resolution $\Sigma$ of $M$, whose first dimensions coincide with Squier's construction $(G,R,S)$.

However, the construction of the polygraphic resolution has proved to be too complicated to be effectively computed on non-trivial examples. Cubical categories appear to be a promising framework where Squier' theorem and the construction of the polygraphic resolution would be more straightforward. 

This paper is the first step towards this goal. We start by defining the notion of $(2,k)$-cubical categories. We then adapt some classical notions of word rewriting to this cubical setting. Finally, we express and prove a cubical version of Squier's theorem.

\section{Cubical $2$-categories}

\begin{defn}
A cubical $2$-set consists of:
\begin{itemize}
\item Sets $\C_0$, $\C_1$ and $\C_2$, whose objects are respectively called the $0$, $1$ and $2$-cells.
\item Applications $\partial^+,\partial^- : \C_1 \to \C_0$.
\item Applications $\partial_1^+,\partial_1^-,\partial_2^+,\partial_2^-: \C_2 \to \C_1$.
\end{itemize}
Satisfying the following relations for any $\alpha, \beta \in \{+,-\}$: 
\[
\partial^\alpha \partial_2^\beta = \partial^\beta \partial_1^\alpha.
\]
\end{defn}

\begin{nota}
We represent a $1$-cell $f$ in the following way :
\begin{tikzcd}[nodes in empty cells, cramped, row sep = 0.7cm, column sep = 0.7cm]%
 \partial^- f \arrow[r, start anchor = {real east}, end anchor = {real west}, "f"] & \partial^+ f,
\end{tikzcd}
 and a  $2$-cell $A$ as:
\[
\begin{tikzcd}[nodes in empty cells, row sep = 0.7cm, column sep = 0.7cm]%
  \arrow[d, start anchor = {south}, end anchor = {north}, "\partial_2^- A"']
  \arrow[dr, phantom, start anchor = {real center}, end anchor = {real center}, "A" description]
  \arrow[r, start anchor = {real east}, end anchor = {real west}, "\partial_1^- A"] & 
  \arrow[d, start anchor = {south}, end anchor = {north}, "\partial_2^+ A"] \\
  \arrow[r, start anchor = {real east}, end anchor = {real west}, "\partial_1^+ A"' ] &
\end{tikzcd}
\]
\end{nota}

\paragraph*{Cubical $2$-categories.} A cubical $2$-category is a cubical $2$-set equipped with extra structure. See \cite{ABS02} for a formal definition. We give here a run-down of the structure.

\begin{itemize}
\item An operation $\star$ sending any two $1$-cells \begin{tikzcd}[nodes in empty cells, cramped, row sep = 0.7cm, column sep = 0.7cm]% 
x
  \arrow[r, "f"] & y
  \arrow[r, "g"] & z
\end{tikzcd} to a $1$-cell \begin{tikzcd}[nodes in empty cells, cramped, row sep = 0.7cm, column sep = 0.7cm]%
  \arrow[r, start anchor = {real east}, end anchor = {real west}, "f \star g"] x & z.
\end{tikzcd}
\item An operation $\epsilon$ sending any $0$-cell $x$ to a $1$-cell \begin{tikzcd}
x \ar[r, "\epsilon x"] &
x,
\end{tikzcd} which we usually represent by \begin{tikzcd}
x \ar[r, equal] &
x.
\end{tikzcd}
\item An operation $\star_1$ (resp. $\star_2$) associating, to any $2$-cells
\smallsquare{1}{1}{"A"} and \smallsquare{1}{1}{"B"}
satisfying $\partial_1^+ A  = \partial_1^- B$ (resp. $\partial_2^+ A  = \partial_2^- B$), $2$-cells 
\[
\rectangle{1}{2}{"A \star_1 B"}{1cm}{.7cm}
\qquad
\smallsquare{2}{1}{"A \star_2 B"}
\]
\item Operations $\epsilon_1, \epsilon_2 : \C_1 \to \C_2$ sending any $1$-cell  \begin{tikzcd}[nodes in empty cells, cramped, row sep = 0.7cm, column sep = 0.7cm]%
  \arrow[r, start anchor = {real east}, end anchor = {real west}, "f"]  & 
\end{tikzcd} to $2$-cells   
\begin{tikzcd}[nodes in empty cells, cramped, row sep = 0.7cm, column sep = 0.7cm]%
  \arrow[d, equal, start anchor = {south}, end anchor = {north}]
  \arrow[dr, phantom, start anchor = {real center}, end anchor = {real center}, "\epsilon_1 f" description]
  \arrow[r, start anchor = {real east}, end anchor = {real west}, "f"] & 
  \arrow[d, equal, start anchor = {south}, end anchor = {north}] \\
  \arrow[r, start anchor = {real east}, end anchor = {real west}, "f"'] &
\end{tikzcd}
and 
\begin{tikzcd}[nodes in empty cells, cramped, row sep = 0.7cm, column sep = 0.7cm]%
  \arrow[d, start anchor = {south}, end anchor = {north}, "f"']
  \arrow[dr, phantom, start anchor = {real center}, end anchor = {real center}, "\epsilon_2 f" description]
  \arrow[r, equal, start anchor = {real east}, end anchor = {real west}] & 
  \arrow[d, start anchor = {south}, end anchor = {north}, "f"] \\
  \arrow[r, equal, start anchor = {real east}, end anchor = {real west}] &
\end{tikzcd}.
\item Operations $\Gamma^-, \Gamma^+ : \C_1 \to \C_2$ sending any $1$-cell \begin{tikzcd}[nodes in empty cells, cramped, row sep = 0.7cm, column sep = 0.7cm]%
  \arrow[r, start anchor = {real east}, end anchor = {real west}, "f"]  & 
\end{tikzcd} to $2$-cells 
\begin{tikzcd}[nodes in empty cells, row sep = 0.7cm, column sep = 0.7cm]%
  \arrow[d, start anchor = {south}, end anchor = {north}, "f"']
  \arrow[dr, phantom, start anchor = {real center}, end anchor = {real center}, "\Gamma^- f" description]
  \arrow[r, "f" , start anchor = {real east}, end anchor = {real west}] & 
  \arrow[d, equal, start anchor = {south}, end anchor = {north}] \\
  \arrow[r, equal, start anchor = {real east}, end anchor = {real west}] &
\end{tikzcd}
and 
\begin{tikzcd}[nodes in empty cells, cramped, row sep = 0.7cm, column sep = 0.7cm]%
  \arrow[d, equal, start anchor = {south}, end anchor = {north}]
  \arrow[dr, phantom, start anchor = {real center}, end anchor = {real center}, "\Gamma^+ f" description]
  \arrow[r, equal, start anchor = {real east}, end anchor = {real west}] & 
  \arrow[d, "f", start anchor = {south}, end anchor = {north}] \\
  \arrow[r, "f"', start anchor = {real east}, end anchor = {real west}] &
\end{tikzcd}.
\end{itemize}
Those operations have to satisfy a number of axioms. In particular, $(\C_0,\C_1,\partial^-, \partial^+, \star, \epsilon)$ and $(\C_1,\C_2,\partial_i^-,\partial_i^+,\star_i,\epsilon_i)$ (for $i  =1,2$) are categories.

\begin{remq}
The cells $\Gamma^\alpha$ and $\epsilon_i$ are completely characterised by their faces. Hence we will omit them when the context is clear in the rest of this paper.
\end{remq}

\paragraph*{Cubical $(2,1)$-categories.}

A cubical $(2,1)$-category is given by a cubical $2$-category $\C$ equipped with an operation $T : \C_2 \to \C_2$ sending any $2$-cell \begin{tikzcd}[nodes in empty cells, row sep = 0.7cm, column sep = 0.7cm]%
  \arrow[d, start anchor = {south}, end anchor = {north}, "\partial_2^- A"']
  \arrow[dr, phantom, start anchor = {real center}, end anchor = {real center}, "A" description]
  \arrow[r, start anchor = {real east}, end anchor = {real west}, "\partial_1^- A"] & 
  \arrow[d, start anchor = {south}, end anchor = {north}, "\partial_2^+ A"] \\
  \arrow[r, start anchor = {real east}, end anchor = {real west}, "\partial_1^+ A"' ] &
\end{tikzcd} to a $2$-cell of shape \begin{tikzcd}[nodes in empty cells, row sep = 0.7cm, column sep = 0.7cm]%
  \arrow[d, start anchor = {south}, end anchor = {north}, "\partial_1^- A"']
  \arrow[dr, phantom, start anchor = {real center}, end anchor = {real center}, "TA" description]
  \arrow[r, start anchor = {real east}, end anchor = {real west}, "\partial_2^- A"] & 
  \arrow[d, start anchor = {south}, end anchor = {north}, "\partial_1^+ A"] \\
  \arrow[r, start anchor = {real east}, end anchor = {real west}, "\partial_2^+ A"' ] &
\end{tikzcd} such that $T^2 = \id_{\C_2}$ and:
\begin{equation}\label{eq:inverse}
\begin{tikzcd}[nodes in empty cells, cramped, row sep = 0.7cm, column sep = 0.7cm]%
\ar[r, equal] 
\ar[d, equal] 
  & 
\ar[r] 
\ar[d] 
\ar[rd, phantom, start anchor = {real center}, end anchor = {real center}, "TA" description] 
  & 
\ar[d] \\
\ar[r] 
\ar[d]  
\ar[rd, phantom, start anchor = {real center}, end anchor = {real center}, "A" description]
  & 
\ar[r] 
\ar[d] 
  & 
\ar[d, equal] 
  \\
\ar[r] 
  & 
\ar[r, equal] 
  &
\end{tikzcd}
=
\begin{tikzcd}[nodes in empty cells, cramped, row sep = 0.7cm, column sep = 0.7cm]%
\ar[r] 
\ar[d] 
  & 
\ar[r, equal] 
\ar[d, equal] 
  & 
\ar[d, equal] \\
\ar[r, equal] 
\ar[d, equal]  
  & 
\ar[r, equal] 
\ar[d, equal] 
  & 
\ar[d] 
  \\
\ar[r, equal] 
  & 
\ar[r] 
  &
\end{tikzcd}
\end{equation}
 
\begin{remq}
The operation $A \mapsto TA$ corresponds to the operation $A \mapsto A^{-1}$ in a globular setting. The equation $T^2 = \id_{\C_2}$ corresponds to the equality $(A^{-1})^{-1}$ and the axiom \eqref{eq:inverse} corresponds to the relation $A \star_1 A^{-1} = 1$.
\end{remq}

\paragraph*{Cubical $2$-groupoid.}
A cubical $2$-groupoid is a cubical $2$-category such that $(\C_0,\C_1)$ is a groupoid (we note \begin{tikzcd}[nodes in empty cells, cramped, row sep = 0.7cm, column sep = 0.7cm]%
  &  \arrow[l, start anchor = {real west}, end anchor = {real east}, "f"'] 
\end{tikzcd} the inverse of a cell \begin{tikzcd}[nodes in empty cells, cramped, row sep = 0.7cm, column sep = 0.7cm]%
  \arrow[r, start anchor = {real east}, end anchor = {real west}, "f"]  & 
\end{tikzcd}) and equipped with operations $S_1, S_2: \C_2 \to \C_2$, sending any $2$-cell \begin{tikzcd}[nodes in empty cells, row sep = 0.7cm, column sep = 0.7cm]%
  \arrow[d, start anchor = {south}, end anchor = {north}, "\partial_2^- A"']
  \arrow[dr, phantom, start anchor = {real center}, end anchor = {real center}, "A" description]
  \arrow[r, start anchor = {real east}, end anchor = {real west}, "\partial_1^- A"] & 
  \arrow[d, start anchor = {south}, end anchor = {north}, "\partial_2^+ A"] \\
  \arrow[r, start anchor = {real east}, end anchor = {real west}, "\partial_1^+ A"' ] &
\end{tikzcd} to $2$-cells of shape:
\[
\begin{tikzcd}[nodes in empty cells, row sep = 1cm, column sep = 1cm]%
  \arrow[dr, phantom, start anchor = {real center}, end anchor = {real center}, "S_1A" description]
  \arrow[r, start anchor = {real east}, end anchor = {real west}, "\partial_1^+ A"] &  \\
  \arrow[u, "\partial_2^- A"]
  \arrow[r, start anchor = {real east}, end anchor = {real west}, "\partial_1^- A"' ] &
  \arrow[u,  "\partial_2^+ A"']
\end{tikzcd}
\qquad
\begin{tikzcd}[nodes in empty cells, row sep = 1cm, column sep = 1cm]%
  \arrow[d,  "\partial_2^+ A"']
  \arrow[dr, phantom, start anchor = {real center}, end anchor = {real center}, "S_2A" description] &  \arrow[l, start anchor = {real west}, end anchor = {real east}, "\partial_1^- A"']
  \arrow[d,  "\partial_2^- A"] \\
  &
  \arrow[l, start anchor = {real west}, end anchor = {real east}, "\partial_1^+ A" ] 
\end{tikzcd}
\] So that $(\C_1,\C_2,\partial_i^-,\partial_i^+,\star_i,\epsilon_i, S_i)$ is a groupoid for $i = 1,2$.

\medskip
Though the proof is not as straightforward as in the globular case,  we still have the following result.
\begin{prop}
A cubical $2$-groupoid is a cubical $(2,1)$-category.
\end{prop}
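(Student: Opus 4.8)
The goal is to equip a cubical $2$-groupoid $\C$ with a transposition operation $T\colon \C_2 \to \C_2$, namely the reflection of a square across its main diagonal sending the faces $\partial_1^\pm, \partial_2^\pm$ to $\partial_2^\pm, \partial_1^\pm$, and to verify $T^2 = \id_{\C_2}$ together with axiom \eqref{eq:inverse}. The conceptual point to keep in mind is that the two symmetries $S_1$ and $S_2$ do not suffice on their own: as operations on a square they generate only the Klein four-group $\{\id, S_1, S_2, S_1 S_2\}$ inside the dihedral symmetry group of the square, and this subgroup contains neither diagonal reflection. Hence the connections $\Gamma^-, \Gamma^+$ must enter the construction of $T$ in an essential way, which is precisely the source of the extra difficulty compared with the globular situation.

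The plan is to define $T$ by solving axiom \eqref{eq:inverse} rather than by guessing a closed formula. Writing $a = \partial_1^- A$, $b = \partial_1^+ A$, $c = \partial_2^- A$ and $d = \partial_2^+ A$, the left-hand side of \eqref{eq:inverse} is the composite $(\Gamma^+ a \star_2 TA)\star_1(A \star_2 \Gamma^- d)$, in which the unknown cell $TA$ occurs exactly once and is surrounded only by the invertible operations $\star_1, \star_2$. Since $(\C_1, \C_2, \partial_i^-, \partial_i^+, \star_i, \epsilon_i, S_i)$ is a groupoid for $i = 1, 2$, such an equation admits a unique solution; explicitly one may take $TA := S_2(\Gamma^+ a) \star_2 \big(R_A \star_1 S_1(A \star_2 \Gamma^- d)\big)$, where $R_A$ denotes the composite of connections forming the right-hand side of \eqref{eq:inverse}. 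I would adopt this as the definition of $TA$; with it, axiom \eqref{eq:inverse} holds by construction. Moreover the four faces of $TA$ are entirely forced by the gluing conditions in \eqref{eq:inverse} --- its left face must be $a$, its top face $c$, its right face $b$ and its bottom face $d$ --- so $TA$ automatically has the transposed shape required of it, and no separate boundary verification is needed.

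It then remains to prove the one substantial statement, $T^2 = \id_{\C_2}$, and here I would again argue by uniqueness rather than by expanding the composite. Applying the defining equation to the cell $TA$, whose relevant faces are $\partial_1^-(TA) = c$ and $\partial_2^+(TA) = b$, the cell $T(TA)$ is characterised as the unique $Y$ satisfying $(\Gamma^+ c \star_2 Y)\star_1(TA \star_2 \Gamma^- b) = R_{TA}$. It therefore suffices to show that $Y = A$ solves this equation, i.e.
\[ (\Gamma^+ c \star_2 A)\star_1(TA \star_2 \Gamma^- b) = R_{TA}, \]
for then uniqueness forces $T(TA) = A$. To establish this identity I would substitute the value of $TA$ coming from \eqref{eq:inverse} and simplify, using the coherence axioms governing the connections --- the transport laws expressing $\Gamma^\pm$ of a composite, and the laws relating $\Gamma^-$ and $\Gamma^+$ through $S_1$ and $S_2$ --- together with the groupoid cancellation laws $X \star_i S_i X = \epsilon_i(\cdots)$.

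I expect this final identity to be the main obstacle. Unlike the globular equality $(A^{-1})^{-1} = A$, which is immediate, here the reflections $S_1, S_2$ interact non-trivially with the inversion of $1$-cells --- note the reversed arrows in the boundary of $S_2 A$ --- so the bookkeeping of which connection is $\Gamma^+$ and which is $\Gamma^-$, and of the direction in which each boundary $1$-cell runs, is delicate. The computation is essentially a planar diagram chase in which every simplification is licensed by one of the connection or groupoid axioms; the difficulty is organisational rather than conceptual, but it is exactly the part that has no globular analogue.
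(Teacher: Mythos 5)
Your implicit definition of $T$ is sound, and it is a genuinely nice way to organise the problem: writing $a,b,c,d$ for the faces $\partial_1^-A,\partial_1^+A,\partial_2^-A,\partial_2^+A$, the left-hand side of \eqref{eq:inverse} is indeed $(\Gamma^+ a \star_2 TA)\star_1(A\star_2\Gamma^- d)$, and since each $(\C_1,\C_2,\partial_i^\pm,\star_i,\epsilon_i,S_i)$ is a groupoid, this equation has exactly one solution, your $S_2(\Gamma^+ a)\star_2\bigl(R_A\star_1 S_1(A\star_2\Gamma^- d)\bigr)$, and any solution necessarily has the transposed faces. (One quibble: to know the explicit formula \emph{is} a solution you must first compute its faces so that the composites in it are defined, so a short boundary verification is in fact needed; it is routine.) Granting this, \eqref{eq:inverse} holds by construction, and you have correctly reduced $T^2=\id_{\C_2}$ to the single identity $(\Gamma^+ c\star_2 A)\star_1(TA\star_2\Gamma^- b)=R_{TA}$.

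The gap is that this identity is never proved, and it is not a routine loose end: it is the entire mathematical content of the proposition. In globular language your reduction says: from $A\star A^{-1}=1$ deduce $A^{-1}\star A=1$. There this is immediate; here the two equations are conjugated by \emph{different} connections ($\Gamma^+a$ and $\Gamma^-d$ on one side, $\Gamma^+c$ and $\Gamma^-b$ on the other), and transporting one into the other is precisely the difficulty the paper alludes to when it says the proof is ``not as straightforward as in the globular case'' (the paper, for its part, states the proposition without any proof at all). Moreover, the tools you invoke for the simplification --- the ``laws relating $\Gamma^-$ and $\Gamma^+$ through $S_1$ and $S_2$'' --- are not axioms of a cubical $2$-groupoid: the definition only makes each $\star_i$ a groupoid, so identities such as $S_1(B\star_2 C)=S_1B\star_2 S_1C$ (which does follow from interchange), or an expression of $S_2\Gamma^+ f$ through connections on $f$ and $f^{-1}$ (which must be extracted from the transport laws $\Gamma^+f\star_1\Gamma^-f=\epsilon_2 f$ and $\Gamma^+f\star_2\Gamma^-f=\epsilon_1 f$), have to be stated and derived before they can license steps of your diagram chase. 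Until you write down those $S_i$/$\Gamma^\pm$ lemmas, prove them, and then actually carry out the chase establishing the displayed identity, what you have is a correct and useful reformulation of the statement --- an isolation of exactly where the work lies --- but not a proof of it.
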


\section{Word rewriting}

We now apply the structures defined in the previous section to word rewriting.

\begin{defn}
A cubical $(3,2)$-monoid is a cubical $(2,1)$-category object in the category of monoids.
A $(3,1)$-monoid is a cubical $2$-groupoid object in the category of monoids.
\end{defn}

\begin{ex}
In a $(3,2)$-monoid (resp. $(3,1)$-monoid) $\C$, the sets $\C_0,\C_1,\C_2$ are equipped with a structure of monoid, and all the operations on the cells defined previously are morphisms of monoids. In particular, for every $1$-cells $f: u \to u'$ and $g: v \to v'$  in $\C_1$, there is a $1$-cell $fg: uv \to vv'$ in $\C_1$. The fact that $\epsilon_i$ is a morphism of monoids implies that $\epsilon_i (fg) = (\epsilon_i f)(\epsilon_i g)$.
\end{ex}

Polygraphs are presentations for higher-dimensional globular categories and were introduced by Burroni \cite{B93} and by Street under the name of computads \cite{S76} \cite{Street87}. We adapt them here to present cubical $(n,k)$-monoids. 

\begin{defn}
For any set $E$, we denote by $E^*$ the free monoid on $E$. A cubical $2$-polygraph $\Sigma$ is given by two sets $\Sigma_0$, $\Sigma_1$, together with maps $\partial^\alpha: \Sigma_1 \to \Sigma_0^*$ (for $\alpha = \pm$). 

We denote by $\Sigma^*$ (resp. $\Sigma^\top$) the free $2$-monoid (resp. $(2,1)$-monoid) generated by $\Sigma$.
\end{defn}

\begin{defn}
A cubical $(3,2)$-polygraph (resp. $(3,1)$-polygraph) is given by three sets $\Sigma_0$, $\Sigma_1$ and $\Sigma_2$, together with maps $\partial^\alpha:\Sigma_1 \to \Sigma_0^*$ and $\partial_i^\alpha : \Sigma_2 \to \Sigma_1^*$ (resp. $\partial_i^\alpha : \Sigma_2 \to \Sigma_1^\top$).  

We denote by $\Sigma^*$ (resp. $\Sigma^\top$) the free $(3,2)$-monoid (resp. the free $(3,1)$-monoid) generated by $\Sigma$.
\end{defn}

\begin{ex}
If $\Sigma$ is a cubical $(3,2)$-polygraph, the cells of $\Sigma$ and $\Sigma^*$ together with the faces operations can be visualized as follows (a similar diagram could be drawn for $\Sigma^\top$):
\[
\begin{tikzcd}[sep = 1.5cm]
\Sigma_0 
\ar[d, hookrightarrow]
& 
\Sigma_1 
\ar[d, hookrightarrow]
\ar[ld, shift left = .5mm]
\ar[ld, shift right = .5mm]
& 
\Sigma_2 
\ar[d, hookrightarrow]
\ar[ld, shift left = .5mm]
\ar[ld, shift right = .5mm]
\ar[ld, shift left = 1.5mm]
\ar[ld, shift right = 1.5mm]
\\
\Sigma_0^* 
& 
\Sigma_1^* 
\ar[l, shift left = .5mm]
\ar[l, shift right = .5mm]
& 
\Sigma_2^* 
\ar[l, shift left = .5mm]
\ar[l, shift right = .5mm]
\ar[l, shift left = 1.5mm]
\ar[l, shift right = 1.5mm]
\end{tikzcd}
\]
\end{ex}

The notion of cubical $2$-monoid coincides with the notion of (globular) $2$-monoid as defined by Burroni in \cite{B93}, and a cubical $2$-polygraph is a particular case of (globular) $2$-polygraph. As a consequence, the following classical definitions and results come directly from classical word rewriting theory \cite{GM14}.

\begin{defn}
Let $\Sigma$ be a cubical $2$-polygraph. A \emph{rewriting step} in $\Sigma_1^*$ is a $1$-cell of the form $ufv$, where $f$ is in $\Sigma_1$, and $u$ and $v$ are elements of $\Sigma_0^*$.
\end{defn}

\begin{defn}
Let $\Sigma$ be a cubical $2$-polygraph. It is \emph{confluent} if for any $1$-cells $f$ and $g$ in $\Sigma_1^*$ with the same source, there exist $1$-cells $f'$ and $g'$ in $\Sigma_1^*$ with the same target and such that $\partial^+ f = \partial^- f'$ and $\partial^+ g = \partial^- g'$.

It is \emph{terminating} if there is no infinite sequence of rewriting steps $f_1,\ldots, f_n, \ldots$ satisfying that $\partial^+ f_i = \partial^- f_{i+1}$ for all $i$.

It is \emph{convergent} if it is both terminating and confluent.
\end{defn}

\begin{defn}
Let $\Sigma$ be a cubical $2$-polygraph. A \emph{branching} is a pair of $1$-cells $f,g \in \Sigma_1^*$ with the same source. It is said to be \emph{local} if $f$ and $g$ are rewriting steps.

Up to permutation of $f$ and $g$, there are three distinct types of local branchings:
\begin{itemize}
\item If $f = g$, $(f,g)$ is said to be an \emph{aspherical branching}.
\item If there exists $f',g' \in \Sigma_1^*$ and $u,v \in \Sigma_0^*$ such that $f = f'v$ and $g = ug'$ with $\partial^- f' = u$ and $\partial^- g' = v$, $(f,g)$ is said to be a \emph{Peiffer branching}.
\item Otherwise, $(f, g)$ is said to be an \emph{overlapping branching}.
\end{itemize}

Finally a \emph{critical branching} is a minimal overlapping branching, where overlapping branchings are ordered by the (well-founded) relation: $(f,g) \leq (ufv,ugv)$ for $u,v  \in \Sigma_0^*$
\end{defn}

\section{Squier's theorem}

Before stating Squier's theorem, we need to define the cubical analogue to the notion of globe. 

\begin{defn}
Let $\C$ be a cubical $2$-category.
A \emph{shell} over $\C_1$ is a family of cells $f_i^\alpha$ in $\C_1$, ($i = 1,2$ and $\alpha = +,-$) satisfying $\partial^\alpha f_2^\beta = \partial^\beta f_1^\alpha$ for every $\alpha$ and $\beta$.

A \emph{filler} in $\C_2$ of a shell $S = (f_i^\alpha)$ over $\C_1$ is a cell $A \in \C_2$ satisfying $\partial_i^\alpha A = f_i^\alpha$ for every $i$ and $\alpha$.
\end{defn}

The main result of this paper is the following:

\begin{thm}[Cubical Squier's theorem]\label{thm:Squier_Cubical}
Let $\Sigma$ be a convergent cubical $(3,2)$-polygraph. Suppose that for every critical pair $(f_1,f_2)$ of $\Sigma$, there exists (up to exchange of $f_1$ and $f_2$) a $2$-cell in $\Sigma_2$ whose shell is of the form:
\[
 \begin{tikzcd}[nodes in empty cells, row sep = 0.7cm, column sep = 0.7cm]%
  \arrow[d, start anchor = {south}, end anchor = {north}, "f_2"']
  \arrow[r, start anchor = {real east}, end anchor = {real west}, "f_1"] & 
  \arrow[d, start anchor = {south}, end anchor = {north}] \\
  \arrow[r, start anchor = {real east}, end anchor = {real west}] &
\end{tikzcd}
\]

Then every shell $S$ over $\Sigma_1^\top$ admits a filler in $\Sigma_2^\top$.
\end{thm}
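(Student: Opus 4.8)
The plan is to isolate the class of shells over $\Sigma_1^\top$ that admit a filler in $\Sigma_2^\top$ and to show it is large enough by exploiting the algebraic structure rather than attacking an arbitrary shell head-on. First I would record the stability properties of this class. Since $\Sigma^\top$ is a $(3,1)$-monoid, the $2$-cells form groupoids for both $\star_1$ and $\star_2$ and carry the transpose $T$ and the inverses $S_1,S_2$; consequently, if a shell is fillable then so are all of its reflections and transposes, and two shells sharing a face can be glued by $\star_1$ or $\star_2$ so that the composite shell is again fillable. The monoid structure gives one further closure: whiskering a filler $A$ by words $u,v\in\Sigma_0^*$, that is forming the product $u\,A\,v$ in the monoid $\Sigma_2^\top$, fills the whiskered shell $u\,S\,v$. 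These operations let me reshape and assemble fillers freely, so that it suffices to produce fillers for shells of a few convenient shapes.

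Next I would fill the three kinds of local branchings, viewed as shells whose top and left faces are the two rewriting steps issuing from a common word. An aspherical branching $f_1=f_2=f$ is filled by the connection $\Gamma^- f$, whose shell has $f$ as top and left face and identities elsewhere. A Peiffer branching $f=f'v$, $g=ug'$ is filled by the interchange cell obtained as the monoid product $\epsilon_2 f'\cdot\epsilon_1 g'$, whose faces are $ug'$, $u'g'$, $f'v$, $f'v'$ (up to a transpose $T$). A critical branching is fillable by hypothesis, and by the whiskering closure together with the ordering $(f,g)\le(ufv,ugv)$, every overlapping branching is the context of a critical one and is therefore fillable as well. Thus every local branching admits a confluence square carrying a filler.

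The core of the argument is a cubical Newman's lemma. Using termination I would argue by Noetherian induction on the source word $w$ that for any branching $(p,q)$ with $p,q\in\Sigma_1^*$ of source $w$ there are reductions of $\partial^+p$ and $\partial^+q$ to a common word completing a fillable square: writing $p=p_1\star p'$ and $q=q_1\star q'$ with $p_1,q_1$ the first rewriting steps, I fill the local branching $(p_1,q_1)$ by the previous step, apply the induction hypothesis to the strictly smaller words reached, and paste the resulting fillers with $\star_1$ and $\star_2$, the closure under composition guaranteeing the assembled cell is a genuine filler. Fixing for each word $w$ a positive reduction $\pi_w\colon w\to\hat w$ to its unique normal form, this shows that any two parallel positive reductions are joined by a fillable square. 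I would then promote this to arbitrary $1$-cells: for $f\colon u\to v$ in $\Sigma_1^\top$, the shell $\Box(f)$ with top $f$, left $\pi_u$, right $\pi_v$ and bottom the identity on $\hat u=\hat v$ is fillable, proved by induction on $f$ as a composite of rewriting steps and inverses, using the confluence squares for single steps, the cancellation of $\pi_w$ against its $S$-inverse for composites, and the closure under $T,S_1,S_2$ for inverse $1$-cells.

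Finally, for a general shell $S$ with faces $f_1^-,f_1^+,f_2^-,f_2^+$ I would compare the two boundary composites $p=f_1^-\star f_2^+$ and $q=f_2^-\star f_1^+$, which are parallel $1$-cells between opposite corners. Pasting $\Box(p)$ with the reverse of $\Box(q)$ and cancelling the $\pi$-faces via the groupoid structure produces a filler of the shell with top $p$, bottom $q$ and identity side faces; it then remains to fold this into a filler of the square $S$. This last step is where I expect the real difficulty: the folding must be carried out explicitly with the connections $\Gamma^-,\Gamma^+$ and the degeneracies $\epsilon_i$, realigning the single composite boundary into the four separate faces while checking at every stage that the cells actually compose along matching faces. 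Establishing the closure properties of the first paragraph rigorously and performing this connection-based folding are the two places where the cubical bookkeeping is genuinely heavier than in the globular setting; the Newman induction itself, though technical, follows the classical pattern once composition of fillers is known to preserve fillability.
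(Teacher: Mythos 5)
Your three preparatory steps are, in substance, the paper's three lemmas: you fill local branchings exactly as the paper does ($\Gamma^- f$ for aspherical branchings, a product of degenerate cells for Peiffer ones, whiskering plus $T$ plus the hypothesis for overlapping ones); your cubical Newman induction is the paper's second lemma; and your $\Box(f)$ (top $f \in \Sigma_1^\top$, sides reductions to normal form, bottom an identity) is the paper's third lemma, which the paper likewise proves by showing the class of good $f$ contains $\Sigma_1^*$ and is closed under $\star$ and inverses. The genuine divergence is the endgame. The paper never globularizes the shell: it applies the third lemma once per face $f_1^\pm$, $f_2^\pm$, with auxiliary reductions $g_1,\dots,g_4$ to normal forms, rotates the four resulting cells by $T$, $S_1$, $S_2$, and pastes them around degenerate cells into a single grid that fills $S$ outright. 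You instead compress the shell into the parallel composites $p = f_1^- \star f_2^+$ and $q = f_2^- \star f_1^+$, fill the globular-shaped shell (top $p$, bottom $q$, identity sides), and fold back. Your route makes the reduction to the classical globular statement transparent; the paper's route avoids any folding at the price of a larger explicit pasting. Both are sound.

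The folding you flag as ``the real difficulty'' is in fact routine, so your plan does close. Write $f_1^-\colon u \to x$, $f_2^-\colon u \to w$, $f_2^+\colon x \to v$, $f_1^+\colon w \to v$, and let $C$ be your cell with top $p$, bottom $q$ and identity sides (your cancellation of $\pi_u$ against its inverse is legitimate, since the $1$-cells of $\Sigma^\top$ form an honest groupoid, so $\pi_u \star \pi_u^{-1} = \epsilon u$ on the nose). Then
\[
\bigl[(\epsilon_1 f_1^-) \star_2 (\Gamma^+ f_2^+)\bigr] \;\star_1\; C \;\star_1\; \bigl[(\Gamma^- f_2^-) \star_2 (\epsilon_1 f_1^+)\bigr]
\]
is a filler of $S$: the first row has faces (top $f_1^-$, left $\epsilon u$, right $f_2^+$, bottom $p$), the last row has faces (top $q$, left $f_2^-$, right $\epsilon v$, bottom $f_1^+$), and the three rows compose along $p$ and $q$, leaving exactly the four faces of $S$. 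One caution, which applies equally to the paper's own write-up: in the Newman induction, after filling the local branching $(p_1,q_1)$, the two new faces of that square need not end at the normal form, so before invoking the induction hypothesis one must prolong them by a chosen reduction to normal form and absorb the prolongation (again using a connection); your phrase ``apply the induction hypothesis to the strictly smaller words reached'' glosses over this in the same way the paper's pasting diagram does.
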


The proof of this result occupies the rest of this article and loosely follows the proof of the globular case from \cite{GM14}.

\begin{lem}
For every local branching $(f_1,f_2)$, there exists a cell $A$ in $\Sigma_2^*$ such that $\partial_1^- A = f_1$ and $\partial_2^- A = f_2$. So $A$ is of the following shape: 
\[
\begin{tikzcd}[nodes in empty cells, row sep = 0.7cm, column sep = 0.7cm]%
\ar[rd, start anchor = {real center}, end anchor = {real center}, phantom, "A" description]
  \arrow[d, start anchor = {south}, end anchor = {north}, "f_2"']
  \arrow[r, start anchor = {real east}, end anchor = {real west}, "f_1"] & 
  \arrow[d, start anchor = {south}, end anchor = {north}] \\
  \arrow[r, start anchor = {real east}, end anchor = {real west}] &
\end{tikzcd}
\]
\end{lem}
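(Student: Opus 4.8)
The plan is to proceed by case analysis on the three types of local branching isolated in the definition above: aspherical, Peiffer, and overlapping. In each case I will exhibit an explicit cell $A \in \Sigma_2^*$ and read off its two relevant faces $\partial_1^- A$ and $\partial_2^- A$. Throughout I will use that $\Sigma^*$ is a $(3,2)$-monoid, so that each face map $\partial_i^\alpha$ is a morphism of monoids and therefore commutes with the monoid product on cells; this is what lets me compute the faces of a product of cells factor by factor. I will also use that $\Sigma_2^*$, being the set of $2$-cells of a cubical $(2,1)$-category, is closed under the degeneracy operations $\epsilon_1,\epsilon_2,\Gamma^-$ and under the transpose $T$.

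The two degenerate cases are immediate. If $(f_1,f_2)$ is aspherical, then $f_1 = f_2 =: f$, and I take $A = \Gamma^- f$: by the description of $\Gamma^-$ its two negative faces are $\partial_1^- A = \partial_2^- A = f$, which is exactly what is required. If $(f_1,f_2)$ is Peiffer, I write $f_1 = f'v$ and $f_2 = u g'$ with $\partial^- f' = u$ and $\partial^- g' = v$, and set $A = (\epsilon_1 f')(\epsilon_2 g')$, the monoid product in $\Sigma_2^*$ of the horizontal degeneracy of $f'$ and the vertical degeneracy of $g'$. Since the faces are monoid morphisms, and since $\epsilon_1 f'$ has trivial vertical faces while $\epsilon_2 g'$ has trivial horizontal faces, I obtain $\partial_1^- A = f'\,\epsilon(\partial^- g') = f'v = f_1$ and $\partial_2^- A = \epsilon(\partial^- f')\,g' = u g' = f_2$, as wanted.

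The overlapping case is where the hypothesis of Theorem \ref{thm:Squier_Cubical} enters. First I would use the well-foundedness of the order $(f,g) \leq (ufv,ugv)$ to write the branching as $(f_1,f_2) = (u f_1' v, u f_2' v)$ for some critical branching $(f_1',f_2')$ and words $u,v \in \Sigma_0^*$. The hypothesis then provides a generating $2$-cell $A' \in \Sigma_2$ whose shell has $f_1'$ and $f_2'$ as its two negative faces; if the roles of $f_1'$ and $f_2'$ happen to be exchanged, I would first apply the transpose $T$, which swaps the direction-$1$ and direction-$2$ faces and hence restores $\partial_1^-(TA') = f_1'$ and $\partial_2^-(TA') = f_2'$. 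Finally I contextualise by forming the monoid product $A = u A' v$ in $\Sigma_2^*$, multiplying on each side by the word $u$, resp. $v$, regarded as a doubly-degenerate $2$-cell; computing faces factor by factor yields $\partial_1^- A = u f_1' v = f_1$ and $\partial_2^- A = u f_2' v = f_2$.

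I expect the overlapping case to be the only genuine difficulty, and within it the bookkeeping rather than any single hard idea: one must justify the reduction to critical branchings from the well-founded order, and keep careful track of orientations so that the transpose $T$ is applied exactly when the hypothesis supplies the branching in the opposite order. The aspherical and Peiffer cases, by contrast, are direct verifications that the chosen degeneracy cells have the prescribed faces, relying only on the fact that the structure maps of the free $(3,2)$-monoid are morphisms of monoids.
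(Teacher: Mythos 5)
Your proposal is correct and follows essentially the same route as the paper's own proof: $\Gamma^- f$ for aspherical branchings, the monoid product $(\epsilon_1 f')(\epsilon_2 g')$ for Peiffer branchings, and reduction of overlapping branchings to critical ones via the hypothesis of Theorem \ref{thm:Squier_Cubical}, using $T$ to fix the orientation and the product $uA'v$ to restore the context. The only difference is expository (the paper states the $T$ and $uAv$ reductions first, and you give slightly more detail on the face computations), not mathematical.
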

\begin{proof}
The proof is similar to the globular case, by distinguishing cases depending on the form of the branching $(f_1,f_2)$.
Note first that if $A$ is a suitable cell for the branching $(f_1,f_2)$, then $TA$ satisfies the conditions for the branching $(f_2,f_1)$, and $uAv$ for the branching $(uf_1v,uf_2v)$. So by hypothesis on $\Sigma_2$, it remains to show that the property holds for aspherical and Peiffer branchings.

If $(f_1,f_2) = (f,f)$ is an aspherical branching, then the $2$-cell \begin{tikzcd}[nodes in empty cells, row sep = 0.7cm, column sep = 0.7cm]%
  \arrow[d, start anchor = {south}, end anchor = {north}, "f"']
  \arrow[dr, phantom, start anchor = {real center}, end anchor = {real center}, "\Gamma^- f" description]
  \arrow[r, "f" , start anchor = {real east}, end anchor = {real west}] & 
  \arrow[d, equal, start anchor = {south}, end anchor = {north}] \\
  \arrow[r, equal, start anchor = {real east}, end anchor = {real west}] &
\end{tikzcd} satisfies the condition.

If $(f_1,f_2) = (fv,ug)$ is a Peiffer branching, then the $2$-cell $(\epsilon_1 f) (\epsilon_2 g)$ satisfies the condition:
\[
\begin{tikzcd}[nodes in empty cells, row sep = 0.7cm, column sep = 0.7cm]%
  \arrow[d, equal, "u"', start anchor = {south}, end anchor = {north}]
  \arrow[dr, phantom, start anchor = {real center}, end anchor = {real center}, "\epsilon_1 f" description]
  \arrow[r, "f" , start anchor = {real east}, end anchor = {real west}] & 
  \arrow[d, equal, "u'", start anchor = {south}, end anchor = {north}] \\
  \arrow[r, "f"', start anchor = {real east}, end anchor = {real west}] &
\end{tikzcd} \cdot \begin{tikzcd}[nodes in empty cells, row sep = 0.7cm, column sep = 0.7cm]%
  \arrow[d, start anchor = {south}, end anchor = {north}, "g"']
  \arrow[dr, phantom, start anchor = {real center}, end anchor = {real center}, "\epsilon_2 g" description]
  \arrow[r, equal, "v", start anchor = {real east}, end anchor = {real west}] & 
  \arrow[d, "g", start anchor = {south}, end anchor = {north}] \\
  \arrow[r, equal, "v'"', start anchor = {real east}, end anchor = {real west}] &
\end{tikzcd} = 
\begin{tikzcd}[nodes in empty cells, row sep = 0.7cm, column sep = 1.5cm]%
  \arrow[d, start anchor = {south}, end anchor = {north}, "ug"']
  \arrow[dr, phantom, start anchor = {real center}, end anchor = {real center}, "(\epsilon_1 f) (\epsilon_2 g)" description]
  \arrow[r, "fv", start anchor = {real east}, end anchor = {real west}] & 
  \arrow[d, "u'g", start anchor = {south}, end anchor = {north}] \\
  \arrow[r, "fv'"', start anchor = {real east}, end anchor = {real west}] &
\end{tikzcd}\] 
\end{proof}

\begin{lem}
For every $f,g \in \Sigma_1^*$ of same source and of target a normal form, the  shell \begin{tikzcd}[nodes in empty cells, row sep = 0.7cm, column sep = 0.7cm]%
  \arrow[d,  start anchor = {south}, end anchor = {north}, "f"']
  \arrow[r, start anchor = {real east}, end anchor = {real west}, "g"] & 
  \arrow[d, equal, start anchor = {south}, end anchor = {north}] \\
  \arrow[r, equal, start anchor = {real east}, end anchor = {real west}] &
\end{tikzcd} admits a filler in $\Sigma_1^*$.
\end{lem}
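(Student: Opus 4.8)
The plan is to proceed by Noetherian induction on the source word, using termination to make the induction well-founded and using confluence (through the previous lemma) to produce local fillers that are then pasted together. First I would record that the shell is well-posed: since $f$ and $g$ share a source $w$ and both have a normal form as target, convergence forces these targets to coincide with the unique normal form $\widehat{w}$ of $w$, so the right and bottom edges of the displayed square are both $\epsilon\widehat{w}$, exactly as a shell requires. The induction is on $w$ with respect to the order generated by the rewriting steps, which is well-founded precisely because $\Sigma$ is terminating.

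In the base case $w = \widehat{w}$ is already a normal form; then $f = g = \epsilon w$ and the shell is filled by the doubly degenerate cell $\epsilon_1(\epsilon w)$, all of whose faces are identities. For the inductive step, assume $w$ is not a normal form, so both $f$ and $g$ are nontrivial and decompose as $f = f_1 \star f'$ and $g = g_1 \star g'$ with $f_1, g_1$ rewriting steps of respective targets $a$ and $b$. The pair $(f_1, g_1)$ is a local branching, so the previous lemma provides a cell $A \in \Sigma_2^*$ with $\partial_1^- A = f_1$, $\partial_2^- A = g_1$, and right and bottom edges $p = \partial_2^+ A : a \to c$ and $q = \partial_1^+ A : b \to c$ witnessing confluence at some word $c$. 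By uniqueness of normal forms the normal form of $c$ is again $\widehat{w}$, so termination yields a reduction $r : c \to \widehat{w}$. Now $f'$ and $p \star r$ are parallel paths $a \to \widehat{w}$, and $g'$ and $q \star r$ are parallel paths $b \to \widehat{w}$; since $a$ and $b$ lie strictly below $w$, the induction hypothesis supplies fillers $B$ and $C$ of the two corresponding shells.

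It then remains to paste $A$, $B$ and $C$ into a single filler of the original shell. The target composite reads, in globular language, as $f = f_1 \star f' \Rightarrow f_1 \star p \star r \Rightarrow g_1 \star q \star r \Rightarrow g_1 \star g' = g$: the first step is $B$ whiskered by $f_1$, the middle step is $A$ read in the direction $f_1 \star p \Rightarrow g_1 \star q$ and whiskered by $r$, and the last step is $C$ read in the direction $q \star r \Rightarrow g'$ and whiskered by $g_1$. The point that makes this legitimate inside $\Sigma_2^*$, even though no $S_i$ is available, is that $T$ plays the role of the globular inverse of a $2$-cell, as recorded in the remark following the definition of cubical $(2,1)$-categories: the reversed readings of $A$ and of $C$ are realised by $TA$ and $TC$, which again lie in $\Sigma_2^*$, while $B$ is used directly.

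I expect the main obstacle to be precisely this last assembly. Whiskering a ``thin'' square (one with two identity edges) by a $1$-cell, and composing two such squares whose matching faces sit on different cubical directions, are not single applications of $\star_1$ or $\star_2$: they require first reshaping the cells by inserting the connectors $\Gamma^-, \Gamma^+$ and the degeneracies $\epsilon_1, \epsilon_2$ so that the faces to be glued genuinely coincide, and then straightening the boundary of the resulting composite back to the shape with top $g$, left $f$, and $\epsilon\widehat{w}$ on the remaining two edges. This bookkeeping, which has no direct globular analogue, is the delicate part of the argument; termination and the previous lemma carry out the genuinely rewriting-theoretic work, and what is left is to check that the cubical structure is rich enough to perform the pasting with the correct boundary.
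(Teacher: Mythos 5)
Your proposal is correct and follows the same overall strategy as the paper's proof: Noetherian induction on the source word (well-founded by termination), a base case filled by the doubly degenerate cell $\epsilon_1 \epsilon u$, and an inductive step that pastes the local-confluence cell $A$ supplied by the previous lemma with two cells obtained from the induction hypothesis. The one genuine difference is your insertion of the normalizing reduction $r \colon c \to \widehat{w}$ out of the corner of $A$, so that the induction hypothesis is invoked on the pairs $(f', p \star r)$ and $(g', q \star r)$, which really are coinitial and both end at the normal form $\widehat{w}$. The paper instead applies the induction hypothesis directly to the pairs $(f_2, g')$ and $(f', g_2)$ (in its notation, $f' = \partial_1^+ A$ and $g' = \partial_2^+ A$ being the outgoing faces of $A$), and its pasting diagram gives these companion cells identity right and bottom faces; that only typechecks when the corner of $A$ is itself the normal form, which the previous lemma does not guarantee (for a Peiffer branching $(fv,ug)$ the corner is $u'v'$, and for a critical pair it is whatever $\Sigma_2$ happens to provide). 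So on this point your version is the more careful one, and the price is exactly the whiskering you flag at the end --- but that step is routine rather than delicate: for instance ``$A$ whiskered by $r$'' is realised in $\Sigma_2^*$ as $A \star_1 \bigl((\epsilon_1 q) \star_2 \Gamma^+ r\bigr)$, a cell with faces $f_1$, $g_1$, $q \star r$ and $p \star r$, after which the paper's $2 \times 2$ pasting (your $B$ and $C$, plus a doubly degenerate cell in the fourth corner) goes through verbatim, using only $T$, $\Gamma^\pm$, $\epsilon_i$ and $\star_i$, all of which are available in $\Sigma_2^*$.
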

\begin{proof}

Define the \emph{origin} of a shell $(f_i^\alpha)$ as $\partial^- f_1^- \in \Sigma_0^*$. Let us prove that for any $u \in \Sigma_0^*$, any shell over $\Sigma_1^*$ of origin $u$ and of the form \begin{tikzcd}[nodes in empty cells, row sep = 0.7cm, column sep = 0.7cm]
  \arrow[d,  start anchor = {south}, end anchor = {north}, "f"']
  \arrow[r, start anchor = {real east}, end anchor = {real west}, "g"] & 
  \arrow[d, equal, start anchor = {south}, end anchor = {north}] \\
  \arrow[r, equal, start anchor = {real east}, end anchor = {real west}] &
 \end{tikzcd} admits a filler. We reason by induction on $u$. If $u$ is a normal form, then $f = g = \epsilon u$ and $\epsilon_1 \epsilon u$ is a filler of the shell.

If $u$ is not a normal form, then we can write $f = f_1 \star f_2$ and $g = g_1 \star g_2$ in $\Sigma_1^*$, where $f_1$ and $g_1$ are rewriting steps. Let $A$ be a $2$-cell in $\Sigma_2^*$ such that $\partial_1^- A = f_1$ and $\partial_2^- A = g_1$ (which exists thanks to the previous Lemma). Denote $f' = \partial_1^+ A$ and $g' = \partial_2^+ A$. Then we can apply the induction hypothesis to both $(f',g_2)$ and $(f_2,g')$ defining $2$-cells $B_1$ and $B_2$, and we conclude using the following composite:
\[
\begin{tikzcd}[sep = 1cm]
\ar[r, "f_1"] 
\ar[d, "g_1"']
\ar[rd, phantom, "A" description, start anchor = {south}, end anchor = {north}]
& 
\ar[r, "f_2"] 
\ar[d, "g'" description]
\ar[rd, phantom, "B_2" description, start anchor = {south}, end anchor = {north}]
& 
\ar[d, equal]
\\
\ar[r, "f'" description]
\ar[d, "g_2"']
\ar[rd, phantom, "B_1" description, start anchor = {south}, end anchor = {north}]
& 
\ar[r, equal]
\ar[d, equal]
& 
\ar[d, equal]
\\
\ar[r, equal]
& 
\ar[r, equal]
&
{}
\end{tikzcd}
\]
\end{proof}

\begin{lem}
For every $f \in \Sigma_1^\top$, and every $g_1, g_2 \in \Sigma_1^*$ of target a normal form, the shell 
\begin{tikzcd}[nodes in empty cells, row sep = 0.7cm, column sep = 0.7cm]%
  \arrow[d,  start anchor = {south}, end anchor = {north}, "g_1"']
  \arrow[r, start anchor = {real east}, end anchor = {real west}, "f"] & 
  \arrow[d, start anchor = {south}, end anchor = {north}, "g_2"] \\
  \arrow[r, equal, start anchor = {real east}, end anchor = {real west}] &
\end{tikzcd} admits a filler in $\Sigma_2^\top$.
\end{lem}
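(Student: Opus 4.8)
The plan is to induct on the length of $f$ written as a $\star$-composite of rewriting steps and formal inverses of rewriting steps; such a decomposition exists since every $1$-cell of $\Sigma_1^\top$ is built from the rewriting steps and their inverses under $\star$ and $\cdot$. Confluence enters in one way throughout: since $g_1 : u \to w$ lies in $\Sigma_1^*$ and $w$ is a normal form, $w$ is \emph{the} normal form of $u = \partial^- f$; any word connected to $u$ by rewriting steps in either direction shares this normal form, so for every intermediate word $v$ that arises there is a forward reduction $v \to w$ in $\Sigma_1^*$, which I use to interpolate the sides of the shells I build.

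\textbf{Forward case.} Suppose first that $f \in \Sigma_1^*$. Then $f \star g_2$ and $g_1$ are both $1$-cells of $\Sigma_1^*$ from $u$ to the normal form $w$, so the previous lemma yields a filler $D \in \Sigma_2^*$ of the shell with top $f \star g_2$, left $g_1$, and identities on the right and bottom. It then remains to ``unfold'' the trailing $g_2$ from the top edge onto the right edge, which is exactly what a connection does: setting $E = (\epsilon_1 f) \star_2 (\Gamma^+ g_2)$, one checks from the faces of $\epsilon_1$ and $\Gamma^+$ that $E$ has top $f$, left $\epsilon(\partial^- f)$, right $g_2$ and bottom $f \star g_2$, so that $\partial_1^+ E = \partial_1^- D$ and
\[
A \;=\; \bigl( (\epsilon_1 f) \star_2 (\Gamma^+ g_2) \bigr) \star_1 D
\]
is a filler of the required shell. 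In particular this settles the base case $f = \epsilon u$.

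\textbf{Inverse factor.} In general write $f = h \star f''$ with $h$ a single rewriting step or the inverse of one, and $f''$ shorter. Choose, by confluence, a forward reduction $k : \partial^+ h \to w$ in $\Sigma_1^*$. By induction there is a filler $Q \in \Sigma_2^\top$ of the shell with top $f''$, left $k$, right $g_2$, bottom identity. For the remaining square (top $h$, left $g_1$, right $k$, bottom identity) I produce a filler $P$ in two cases. If $h$ is a rewriting step, $P$ is given by the forward case. If $h = \overline\varphi$ is the inverse of a rewriting step $\varphi$, I apply the forward case to $\varphi$ with left $k$ and right $g_1$ to obtain a filler $R$, and set $P = S_2 R$: reading off its faces, $S_2 R$ has top $\overline\varphi = h$, left $g_1$, right $k$ and bottom identity, as needed. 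Then the horizontal composite $P \star_2 Q$ has top $h \star f'' = f$, left $g_1$, right $g_2$ and bottom identity, which closes the induction.

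The genuinely new point, compared with both the globular argument and the previous lemma, is the treatment of the backward factors, and the device for it is the symmetry $S_2$ of the $2$-groupoid $\Sigma_2^\top$: it converts a filler for a forward step $\varphi$ into one for its inverse $\overline\varphi$ while swapping the two vertical sides. Everything else is bookkeeping: the reshaping in the forward case is a single composition with the connection $\Gamma^+$, and convergence is used only to supply the common normal form $w$ and the interpolating reductions $k$. I expect the main thing to watch is orientation and boundary matching, so that each use of $\star_1$, $\star_2$ and $S_2$ is legitimate.
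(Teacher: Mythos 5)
Your proof is correct and follows essentially the same route as the paper's: your forward case is exactly the paper's composite $\bigl((\epsilon_1 f) \star_2 (\Gamma^+ g_2)\bigr) \star_1 D$ built from the previous lemma, your gluing $P \star_2 Q$ along an interpolating reduction to the normal form is the paper's closure under composition, and your use of $S_2$ to handle inverse steps is the paper's closure under inverses. The only difference is organizational: you induct on a decomposition of $f$ into rewriting steps and their inverses, while the paper phrases the identical argument as showing that the set of $1$-cells satisfying the lemma contains $\Sigma_1^*$ and is closed under composition and inverses.
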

\begin{proof}
To prove that the set of $1$-cells $f$ satisfying the Lemma is $\Sigma_1^\top$, we show that it contains $\Sigma_1^*$, and that it is closed under composition and inverses.
\begin{itemize}
\item It contains $\Sigma_1^*$. Indeed, let $f,g_1$ and $g_2$ be $1$-cells in $\Sigma_1^*$. We can form the following composite, where the cell $A$ is obtained by the previous Lemma:
\[
\begin{tikzcd}[nodes in empty cells, row sep = 0.7cm, column sep = 0.7cm]%
\ar[r, "f"]
\ar[d, equal]
& 
\ar[r, equal]
\ar[d, equal]
& 
\ar[d, "g_2"]
\\
\ar[rrd, phantom, "A" description]
\ar[r, "f" description]
\ar[d, "g_1"']
& 
\ar[r, "g_2" description]
& 
\ar[d, equal]
\\
\ar[r, equal]
& 
\ar[r, equal]
& 
\end{tikzcd}
\]
\item It is stable under composition. Indeed, let $f_1, f_2 \in E$ be two composable $1$-cells, and $g_1, g_2 \in \Sigma_1^*$. Let $g_3 \in \Sigma_1^*$ be a $1$-cell such that $\partial^- g_3 = \partial^+ f_1$, and whose target is a normal form. Then the following composite shows that $f_1 \star f_2$ is in $E$, where $A_1$ and $A_2$ exist since $f_1$ and $f_2$ are in $E$:
\[
\begin{tikzcd}[nodes in empty cells]
\ar[rd, phantom, start anchor = {real center}, end anchor = {real center}, "A_1" description]
\ar[r, "f_1", start anchor = {real east}, end anchor = {real west}]
\ar[d, "g_1"']
& 
\ar[rd, phantom, start anchor = {real center}, end anchor = {real center}, "A_2" description]
\ar[r, "f_2", start anchor = {real east}, end anchor = {real west}]
\ar[d, "g_3" description]
& 
\ar[d, "g_2"]
\\
\ar[r, equal, start anchor = {real east}, end anchor = {real west}]
& 
\ar[r, equal, start anchor = {real east}, end anchor = {real west}]
&  
\end{tikzcd}
\]
\item It is stable under inverses. Indeed, let $f \in E$, and let $g_1, g_2 \in \Sigma_1^*$. We can construct the following cell, where $A$ comes from the fact that $f$ is in $E$, applied to the pair $(g_2,g_1)$: 
\begin{tikzcd}[nodes in empty cells]%
  \arrow[d, "g_1"', start anchor = {south}, end anchor = {north}]
  \arrow[dr, phantom, start anchor = {real center}, end anchor = {real center}, "S_2B" description]
 & 
  \arrow[l, "f"' , start anchor = {real west}, end anchor = {real east}]
  \arrow[d, "g_2", start anchor = {south}, end anchor = {north}] \\
  \arrow[r, equal, start anchor = {real east}, end anchor = {real west}] &
\end{tikzcd}
\end{itemize}
\end{proof}

\begin{proof}[Proof of Theorem \ref{thm:Squier_Cubical}]
Let us fix a shell $(f_i^\alpha)$ over $\Sigma_1^\top$. The following cell is a filler of $f_i^\alpha$. The $1$-cells $g_1$, $g_2$, $g_3$ and $g_4$ are arbitrary $1$-cells in $\Sigma_1^*$, with the appropriate source the normal form as target. The cells $B_1,B_2,B_3$ and $B_4$ are obtained by the previous Lemma and rotated as needed using $T$, $S_1$ and $S_2$.

\[
\begin{tikzcd}[nodes in empty cells, row sep = 0.9cm, column sep = 0.9cm]%
\ar[r, equal]
\ar[d, equal]
& 
\ar[r, "f_1^-"]
\ar[d, "g_1" description]
\ar[rd, phantom, start anchor = {real center}, end anchor = {real center}, "B_1" description]
& 
\ar[r, equal]
\ar[d, "g_2" description]
& 
\ar[d, equal]
\\
\ar[r, "g_1" description]
\ar[d, "f_2^-"']
\ar[rd, phantom, start anchor = {real center}, end anchor = {real center}, "B_2" description]
& 
\ar[r, equal]
\ar[d, equal]
&
\ar[rd, phantom, start anchor = {real center}, end anchor = {real center}, "B_3" description]
\ar[d, equal] 
&
\ar[l, "g_2" description] 
\ar[d, "f_2^+"]
\\
\ar[r, "g_3" description]
\ar[d, equal]
& 
\ar[r, equal]
\ar[rd, phantom, start anchor = {real center}, end anchor = {real center}, "B_4" description]
& 
& 
\ar[d, equal]
\ar[l, "g_4" description]
\\
\ar[r, equal]
& 
\ar[r, "f_1^+"']
\ar[u, "g_3" description]
& 
\ar[r, equal]
\ar[u, "g_4" description]
& 
\end{tikzcd}
\]
\end{proof}
\bibliography{cst}

\begin{thebibliography}{1}

\bibitem{ABS02}
Fahd~Ali Al-Agl, Ronald Brown, and Richard Steiner.
\newblock Multiple categories: the equivalence of a globular and a cubical
  approach.
\newblock {\em Adv. Math.}, 170(1):71--118, 2002.

\bibitem{B93}
Albert Burroni.
\newblock Higher-dimensional word problems with applications to equational
  logic.
\newblock {\em Theoretical computer science}, 115(1):43--62, 1993.

\bibitem{GM12}
Yves Guiraud and Philippe Malbos.
\newblock Higher-dimensional normalisation strategies for acyclicity.
\newblock {\em Adv. Math.}, 231(3-4):2294--2351, 2012.

\bibitem{GM14}
Yves Guiraud and Philippe Malbos.
\newblock Polygraphs of finite derivation type, 2014.

\bibitem{S87}
Craig~C. Squier.
\newblock Word problems and a homological finiteness condition for monoids.
\newblock {\em J. Pure Appl. Algebra}, 49(1-2):201--217, 1987.

\bibitem{SOK94}
Craig~C. Squier, Friedrich Otto, and Yuji Kobayashi.
\newblock A finiteness condition for rewriting systems.
\newblock {\em Theoret. Comput. Sci.}, 131(2):271--294, 1994.

\bibitem{S76}
Ross Street.
\newblock Limits indexed by category-valued {$2$}-functors.
\newblock {\em Journal of Pure and Applied Algebra}, 8(2):149--181, 1976.

\bibitem{Street87}
Ross Street.
\newblock The algebra of oriented simplexes.
\newblock {\em Journal of Pure and Applied Algebra}, 49(3):283--335, 1987.

\end{thebibliography}
\bibliographystyle{plain}
\end{document}